\documentclass[12pt]{article}
\usepackage{wrapfig}
\usepackage{graphicx}
\usepackage[utf8x]{inputenc}
\usepackage{amssymb}
\usepackage{enumerate}
\usepackage{amsmath}
\usepackage{amsthm}

\newtheorem{theorem}{Theorem}[section]

\newtheorem{lemma}[theorem]{Lemma}
\newtheorem{remark}[theorem]{Remark}

\title{On the Cauchy problem for a model equation for shallow water waves
of moderate amplitude}
\author{Nilay Duruk Mutluba\c{s}\\
University of Vienna, Faculty of Mathematics,\\
Nordbergstrasse 14, 1090 Vienna, Austria\\
nilay.duruk.mutlubas@univie.ac.at}
\begin{document}

\maketitle
 \begin{abstract}
  \noindent We prove the local well-posedness for a nonlinear equation modeling the evolution
  of the free surface for waves of moderate amplitude in the shallow water regime.
 \end{abstract}

 \noindent
 {\it AMS Subject Classification}: 35Q35
 
 \noindent
 {\it Keywords}: quasilinear hyperbolic equation, well-posedness.

 \section{Introduction}

Water waves and their model equations have drawn attention all the time due to their familiar nature.
Nevertheless, most of the studies were restricted to linear models. Since linearization failed to explain some important
aspects, several nonlinear models have been proposed, explaining nonlinear behaviours such as breaking waves and solitary waves.
A typical example is the Korteweg-de-Vries (KdV) equation \cite{KdV} which models shallow water wave propagation:
\begin{equation*}
u_t+u_x+uu_x+u_{xxx}=0.
\end{equation*}
Physically, the steeping effect of the nonlinearity, represented by $uu_x$, and the smoothing
effect of dispersion, represented by $u_{xxx}$, are in balance with each other. This leads to a remarkable property
of the solitary wave solutions of KdV: they are solitons, recovering their shape and speed after collisions with other waves of this type.

Since KdV does not model breaking waves, several model equations were proposed to capture this phenomenon, in the
sense that they should have classical solutions such that the wave profile remains bounded but its slope becomes
unbounded (see the discussions in \cite{Wh} and \cite{CE}). The proposed equations were mostly mathematical modifications of KdV, with a
quite remote connection to the modeling of water waves. In recent years, many studies were devoted to the
Camassa-Holm (CH) equation \cite{CH}
\begin{equation*}
u_t+u_x+3uu_x-u_{xxt}=2u_xu_{xx}+uu_{xxx}.
\end{equation*}
This equation has a rich structure, being an integrable infinite-dimensional Hamiltonian system (see the discussion in 
\cite{Con11}), and it has bounded classical solutions whose slope becomes unbounded in finite time. In \cite{Jn} and in
\cite{ConLn09} it was shown that both CH arises as a model describing the evolution of the horizontal fluid velocity
at a certain depth within the regime of shallow water waves of moderate amplitude. In terms of the two fundamental
parameters $\mu$ (shallowness parameter) and $\varepsilon$ (amplitude parameter), the shallow water regime
of waves of small amplitude (proper to KdV) is characterized by $\mu \ll 1$ and $\varepsilon=O(\mu)$, while
the regime of shallow water waves of moderate amplitude (proper to CH) corresponds to $\mu \ll 1$ and
$\varepsilon=O(\sqrt{\mu})$; see \cite{AL} and \cite{ConLn09}. Since quantities of order $O(\sqrt{\mu})$ are
also of order $O(\mu)$ for $\mu \ll1$, the regime of moderate amplitude captures a wider range of wave profiles.
In particular, within this regime one expects to obtain equations that model surface water wave profiles that
develop singularities in finite time in the form of breaking waves. The model equation for the evolution of
the surface elevation $\eta(x,t)$ is
\begin{equation}
\label{modamp}
\eta_t+\eta_x+\frac{3}{2}\varepsilon\eta\eta_x-\frac{3}{8}\varepsilon^{2}\eta^{2}\eta_x+\frac{3}{16}\varepsilon^{3}\eta^{3}\eta_x+\mu(\alpha\eta_{xxx}+\beta\eta_{xxt})
=\varepsilon\mu(\gamma\eta\eta_{xxx}+\delta\eta_x\eta_{xx}).
\end{equation}
Here $\alpha$, $\gamma$, $\delta$ and $\beta<0$ are parameters.
The local well-posedness of (\ref{modamp}) for any initial data $\eta_0\in H^{s+1}(\mathbb{R})$ with $s>\frac{3}{2}$ was
proved in \cite{ConLn09}, and the existence of solitary waves was recently obtained in \cite{G}. Note that, unlike KdV or CH,
the equation (\ref{modamp}) does not have a bi-Hamiltonian integrable structure (see \cite{Con11}). Nevertheless, the equation
possesses solitary wave profiles that resemble those of CH, analyzed in \cite{CS}, and present similarities with the shape of the solitary waves
for the governing equations for water waves discussed in \cite{CE2, CEH}, as proved in \cite{G}.

In this paper, we will prove the local well-posedness of the equation (\ref{modamp}) for initial data in
$H^s({\mathbb R})$ with $s>3/2$ by using Kato's semigroup approach for quasilinear equations as it was used in \cite{Rod01}
where the well-posedness of CH equation was investigated. The motivation for proving well-posedness for
less regular initial data stems from the fact that by enlarging the class of initial data, one facilitates the task of finding initial
profiles that develop singularities in finite time in the form of breaking waves. This important aspect of (\ref{modamp}) will
be addressed in a subsequent publication.

Since we will use Kato's theory, in Section 2 we give a brief summary of this approach. In Section 3, we present
the well-posedness results for (\ref{modamp}). Throughout this paper, subscripts denote partial derivatives;
$\partial_x=\partial/\partial x$; $||.||_{X}$ denotes norm in the Banach space $X$,
in particular, $||.||$ is $L^2$ norm;  $H^{s}$ is the classical Sobolev space with norm
$||.||_{H^s}=||.||_{s}$ and $<,>_s$ is its inner product; $\Lambda^s=(1-\partial_x^2)^{s/2}$, $s\in\mathbb{R}$; $[A,B]$
denotes the commutator of the linear operators $A$ and $B$.

 \section{Preliminaries}

 In this section, we state Kato's theorem in a suitable form for our purpose.

Consider the abstract quasi-linear evolution equation in the Hilbert space $X$:
  \begin{equation}
  \label{qlee}
     u_t =A(u)u + f(u), ~~~~t\geq 0,~~~~~u(0)=u_0.
  \end{equation}
Let $Y$ be a second Hilbert space such that $Y$ is continuously and densely injected into $X$ and let $S:Y\rightarrow X$ be a topological isomorphism. Assume that

\begin{itemize}
\item[(A1)] Given $C>0$, for every $y\in Y$ with $||y||_Y\leq C$, $A(y)$ is quasi-m-accretive on $X$, i.e.
$A(y)$ is the generator of a $C_0$ semigroup $\{T(t)\}_{t\geq 0}$ in $X$ satisfying $||T(t)||\leq Me^{\omega t}$ with $M=1$.
\item[(A2)] For every $y\in Y$, $A(y)$ is bounded linear operator from $Y$ to $X$ and
\begin{displaymath}
||(A(y)-A(z))\omega||_X\leq c_1||y-z||_X||\omega||_Y,~~~~y,z,\omega\in Y.
\end{displaymath}
\item[(A3)] For every $C>0$, there is a constant $c_2(C)$ such that $SA(y)S^{-1}=A(y)+B(y)$, for some bounded
linear operator $B(y)$ on $X$ satisfying
 \begin{displaymath}
||(B(y)-B(z))\omega||_X\leq c_2(C)||\omega||_X,~~~~\omega\in X.
\end{displaymath}
\item[(A4)] The function $f$ is bounded in $Y$ and Lipschitz in $X$ and $Y$, i.e.
\begin{equation*}
||f(y)||_Y\leq M
\end{equation*}
for some constant $M>0$. Moreover,
\begin{equation*}
||f(y)-f(z)||_X\leq c_3||y-z||_X,~~~~~\forall y,z\in X
\end{equation*}
and
\begin{equation}
||f(y)-f(z)||_Y\leq c_4||y-z||_Y,~~~~~\forall y,z\in Y.\label{lip}
\end{equation}
\end{itemize}
Here $c_1$, $c_2$, $c_3$ and $c_4$ are non-negative constants.

\begin{theorem} \cite{KatoI, KatoII} Assume (A1), (A2), (A3), (A4) hold.
Given $u_0\in Y$, there is a maximal $T>0$, depending on $u_0$, and a unique solution $u$ to (\ref{qlee}) such that
\begin{displaymath}
u=(u_0,.)\in C([0,T),Y)\cap C^1([0,T),X).
\end{displaymath}
Moreover, the map $u_0\rightarrow u(u_0,.) $ is continuous from $Y$ to \mbox{$C([0,T),Y)\cap C^1([0,T),X)$}.
\end{theorem}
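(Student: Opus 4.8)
The plan is to solve (\ref{qlee}) by linearization and iteration, following Kato's theory of linear evolution equations of hyperbolic type. First I would fix $R>\|u_0\|_Y$ and, for a given $v$ in the closed ball $B_R=\{v\in C([0,T];Y):\|v(t)\|_Y\le R\}$, consider the \emph{linear} non-autonomous problem
\[
\frac{d}{dt}u(t)=A(v(t))\,u(t)+f(v(t)),\qquad u(0)=u_0 .
\]
By (A1) the operators $A(v(t))$ form a family of quasi-m-accretive generators with the uniform bound $M=1$, hence a stable family in Kato's sense; by (A2) the map $t\mapsto A(v(t))$ is norm-continuous from $Y$ to $X$; and by (A3), through the isomorphism $S:Y\to X$ and the relation $SA(y)S^{-1}=A(y)+B(y)$ with $B(y)$ bounded on $X$, the space $Y$ is $A(v(t))$-admissible, so the $S$-conjugated family is again stable. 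Kato's linear theory then furnishes a unique evolution operator $U_v(t,s)$ on $X$ which restricts to a strongly continuous family on $Y$, and the Duhamel formula
\[
u(t)=U_v(t,0)u_0+\int_0^t U_v(t,\tau)\,f(v(\tau))\,d\tau
\]
gives the unique solution $u=:\Phi(v)\in C([0,T];Y)\cap C^1([0,T];X)$.

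Next I would show that $\Phi$ maps $B_R$ into itself for $T$ small. The stability bound $\|U_v(t,s)\|_{X}\le e^{\omega(t-s)}$ together with the corresponding $Y$-bound (whose exponential rate is controlled by the constant $c_2(R)$ from (A3)) and the uniform estimate $\|f(v)\|_Y\le M$ from (A4) yield an a priori bound of the form $\|\Phi(v)(t)\|_Y\le e^{Ct}\big(\|u_0\|_Y+Mt\big)$, so that the iterates $u_{n+1}=\Phi(u_n)$, with $u_0(t)\equiv u_0$, remain in $B_R$ once $T$ is chosen small in terms of $R$. I would then prove that $\{u_n\}$ is Cauchy in the \emph{weaker} space $C([0,T];X)$: subtracting the Duhamel formulas for $u_{n+1}$ and $u_n$ and estimating in $X$, the term $\big(A(u_n)-A(u_{n-1})\big)u_n$ is controlled by $c_1\|u_n-u_{n-1}\|_X$ times the uniform $Y$-bound $R$ (using (A2)), and $f(u_n)-f(u_{n-1})$ by $c_3\|u_n-u_{n-1}\|_X$ (using (A4)); a Gronwall argument gives a contraction in $X$ and hence convergence $u_n\to u$ in $C([0,T];X)$. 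Since the $u_n$ are uniformly bounded in $Y$, the limit satisfies $u\in L^\infty([0,T];Y)$, and by weak-$\ast$ compactness $u(t)\in Y$ with $t\mapsto u(t)$ weakly continuous into $Y$. Uniqueness follows at once from the same $X$-Gronwall estimate applied to two solutions.

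The delicate point, which I expect to be the main obstacle, is upgrading this weak continuity to strong continuity in $Y$ and establishing continuous dependence in the $Y$-topology, since the solution has been produced only as an $X$-limit. For the time-continuity I would show that $t\mapsto\|u(t)\|_Y$ is continuous (right-continuity from the a priori bound, left-continuity by a time-reversal argument together with weak lower semicontinuity of the norm), which combined with weak continuity forces strong continuity; the equation then gives $u_t=A(u)u+f(u)\in C([0,T];X)$, so that $u\in C^1([0,T];X)$. For continuous dependence I would compare two solutions with data $u_0,\tilde u_0$: a Gronwall estimate controls $\|u-\tilde u\|_X$, and the $Y$-Lipschitz bound (\ref{lip}) on $f$ together with the commutator estimate on $B(y)$ from (A3) transfers the control to the $Y$-norm. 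It is precisely here that admissibility (A3) does the essential work: conjugating the equation by $S$ turns the $Y$-estimates into $X$-estimates for the perturbed generator $A(y)+B(y)$, and the boundedness of $B$ lets the regularity be carried back to $Y$.
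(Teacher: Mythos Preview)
The paper does not prove this theorem at all: it is simply quoted from Kato's original works \cite{KatoI,KatoII} and used as a black box in Section~3. So there is no ``paper's own proof'' to compare against.

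That said, your sketch is a faithful outline of Kato's actual argument: freezing $v$ in a $Y$-ball, invoking the linear hyperbolic theory (stability from (A1), $Y$-admissibility from (A3) via the conjugation $SA(y)S^{-1}=A(y)+B(y)$) to build the evolution operator $U_v$, running the iteration $u_{n+1}=\Phi(u_n)$, proving contraction in the weaker norm $C([0,T];X)$ using (A2) and the $X$-Lipschitz part of (A4), and then upgrading the limit from $L^\infty Y$/weak-$Y$ to $C([0,T];Y)$ by a norm-continuity argument. The one place where your description is slightly loose is the continuous-dependence step in the $Y$-topology: the $Y$-Lipschitz bound (\ref{lip}) and the boundedness of $B(y)$ in (A3) are indeed the ingredients, but the actual estimate is more delicate than a straightforward Gronwall, because one must compare the evolution operators $U_v$ and $U_{\tilde v}$ in $\mathcal{B}(Y)$, and this requires the full strength of Kato's stability theory for perturbed families (see \cite{KatoI}, Theorem~7 and its corollaries). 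For the purposes of this paper, however, none of that is needed here; only the statement is used.
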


 \section{Local Theory}

Consider the initial value problem for the general class of equations
  \begin{align}
    \label{MAE}
  & u_t + u_x + \frac{3}{2}\varepsilon u u_x +\varepsilon^{2}\iota u^2u_x + \varepsilon^{3}\kappa u^3u_x+\mu(\alpha u_{xxx}+\beta u_{xxt}) \nonumber\\
  & = \varepsilon\mu (\gamma uu_{xxx} + \delta u_xu_{xx} ),~~~~x\in \mathbb{R}, t>0\\
  & u(x,0)= u_0(x)~~~~x\in \mathbb{R} \label{IV}
  \end{align}
  where $\iota,\kappa\in \mathbb{R}$, $\beta<0$, and $u(x,t)$ denotes the free surface. 
\noindent
Our purpose is to prove local existence of the solution for the Cauchy problem (\ref{MAE})-(\ref{IV}). 
Rewriting (\ref{MAE}) in the form of quasi-linear evolution equation (\ref{qlee}), the initial value problem which is equivalent to (\ref{MAE})-(\ref{IV}) will be as follows:
  \begin{align}
   \label{SG}
    & u_t =- (\frac{\alpha}{\beta}\partial_x-\frac{\varepsilon\gamma}{\beta}u\partial_x )u+f(u) \\
    & u(x,0)= u_0(x) \label{IV2}
  \end{align}
  where
  \begin{align}
f(u)= -(1+\mu\beta\partial_x^2)^{-1}\partial_x[(1-\frac{\alpha}{\beta})u+(\frac{3\varepsilon-2}{4}-\frac{1}{2\mu\beta})u^{2}\\
  +\frac{\varepsilon^{2}\iota}{3}u^{3}+\frac{\varepsilon^{3}\kappa}{4}u^{4}+\frac{3\varepsilon\mu\gamma-\varepsilon\mu\delta-\mu\beta}{2}u_x^2]. \label{f}
  \end{align}

  \begin{theorem}
   Let $u_0\in H^s(\mathbb{R})$, $s>\frac{3}{2}$ be given. Then there exists $T>0$, depending on $u_0$, such that there is a unique solution $u$ to
   (\ref{SG})-(\ref{IV2}) satisfying
   \begin{equation*}
    u=u(u_0,.)\in C([0,T),H^s(\mathbb{R}))\cap C^1([0,T),L^{2}(\mathbb{R})).
   \end{equation*}
Moreover, the map $u_0\in H^s(\mathbb{R})\rightarrow u(u_0,.) $ is continuous from $H^s(\mathbb{R})$ to \\
\mbox{$C([0,T),H^s(\mathbb{R}))\cap C^1([0,T),L^2(\mathbb{R}))$}.
  \end{theorem}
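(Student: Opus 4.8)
The plan is to realize (\ref{SG})--(\ref{IV2}) as an instance of the abstract problem (\ref{qlee}) and to apply the Theorem of Section~2. I take $X=L^2(\mathbb{R})$, $Y=H^s(\mathbb{R})$ and $S=\Lambda^s$, which is an isomorphism of $Y$ onto $X$; comparing (\ref{SG}) with (\ref{qlee}) the relevant operator is the first--order operator
\begin{equation*}
A(u)=a_u\,\partial_x,\qquad a_u:=\frac{\varepsilon\gamma}{\beta}\,u-\frac{\alpha}{\beta},
\end{equation*}
with $u$--dependent coefficient, while $f$ is the nonlocal term (\ref{f}). All estimates are carried out for $y,z$ in a fixed ball $\{\,\|\cdot\|_s\le C\,\}$ of $Y$, with constants allowed to depend on $C$, and I use throughout that $H^{s-1}(\mathbb{R})$ is a Banach algebra continuously embedded in $L^\infty(\mathbb{R})$, both valid since $s-1>\tfrac12$. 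Because $\beta<0$, the operator $1+\mu\beta\partial_x^2$ has symbol $1+\mu|\beta|\xi^2\ge1$, so $(1+\mu\beta\partial_x^2)^{-1}\partial_x$ is a Fourier multiplier of order $-1$ (it gains one derivative) while $(1+\mu\beta\partial_x^2)^{-1}\partial_x^2$ is bounded on every $H^r$; this smoothing is what makes $f$ well behaved.

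For (A1) I would show that $A(y)+\lambda I$ is accretive for a suitable $\lambda=\lambda(C)$: integrating by parts,
\begin{equation*}
\langle A(y)w,w\rangle=\langle a_y w_x,w\rangle=-\tfrac12\langle (a_y)_x w,w\rangle\ge-\tfrac12\|(a_y)_x\|_{L^\infty}\|w\|^2,
\end{equation*}
and $\|(a_y)_x\|_{L^\infty}=\tfrac{|\varepsilon\gamma|}{|\beta|}\|y_x\|_{L^\infty}\le c\|y\|_s\le cC$ by $H^{s-1}\hookrightarrow L^\infty$; the range condition for quasi-$m$-accretivity is the standard one for transport operators with $W^{1,\infty}$ coefficient, giving $M=1$. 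For (A2), $A(y)\colon Y\to X$ is bounded since $\|a_y w_x\|\le\|a_y\|_{L^\infty}\|w\|_s$, and the Lipschitz bound follows by keeping the $L^2$--norm on the difference and the $L^\infty$--norm on the derivative:
\begin{equation*}
\|(A(y)-A(z))w\|=\Big\|\tfrac{\varepsilon\gamma}{\beta}(y-z)w_x\Big\|\le\tfrac{|\varepsilon\gamma|}{|\beta|}\|y-z\|\,\|w_x\|_{L^\infty}\le c_1\|y-z\|\,\|w\|_s.
\end{equation*}

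For (A3) I would compute $B(y)=\Lambda^sA(y)\Lambda^{-s}-A(y)=[\Lambda^s,a_y]\,\partial_x\Lambda^{-s}$, using $[\Lambda^s,\partial_x]=0$, and control it by the Kato--Ponce commutator estimate
\begin{equation*}
\|[\Lambda^s,a_y]v\|\le c\big(\|(a_y)_x\|_{L^\infty}\|v\|_{s-1}+\|a_y\|_{s}\|v\|_{L^\infty}\big).
\end{equation*}
Taking $v=\partial_x\Lambda^{-s}w$, for which $\|v\|_{s-1}\le\|w\|$ and $\|v\|_{L^\infty}\le c\|w\|$, shows that $B(y)$ is bounded on $X$ with a bound depending only on $\|a_y\|_s\le c(1+C)$; since the same holds for $B(z)$, the difference satisfies $\|(B(y)-B(z))w\|\le c_2(C)\|w\|$, which is exactly (A3).

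Finally, for (A4) the one--derivative gain gives $f\colon Y\to Y$ bounded on the ball, because every term in the bracket of (\ref{f})---the polynomials $y,y^2,y^3,y^4$ and the quadratic gradient $y_x^2$---lies in $H^{s-1}$ by the algebra property, and $(1+\mu\beta\partial_x^2)^{-1}\partial_x$ then returns an element of $H^s$; the $Y$--Lipschitz bound (\ref{lip}) is obtained in the same way by factoring differences such as $y^k-z^k$ and $y_x^2-z_x^2$. I expect the $X$--Lipschitz estimate to be the main obstacle, and within it the quadratic gradient term, since there one derivative is at stake and the regularity is borderline at $s=\tfrac32$. Writing $w=y-z$ I would first move a derivative off the product,
\begin{equation*}
y_x^2-z_x^2=(y_x+z_x)w_x=\partial_x\big((y_x+z_x)w\big)-(y_{xx}+z_{xx})w,
\end{equation*}
so that the first piece contributes $(1+\mu\beta\partial_x^2)^{-1}\partial_x^2\big((y_x+z_x)w\big)$, bounded in $L^2$ by $\|y_x+z_x\|_{L^\infty}\|w\|\le cC\|w\|$ since $(1+\mu\beta\partial_x^2)^{-1}\partial_x^2$ is bounded on $L^2$. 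The remaining piece $(1+\mu\beta\partial_x^2)^{-1}\partial_x\big((y_{xx}+z_{xx})w\big)$ I would estimate using that this order $-1$ operator maps $H^{-1}$ into $L^2$, together with the Sobolev product estimate $\|(y_{xx}+z_{xx})w\|_{-1}\le c\|y_{xx}+z_{xx}\|_{s-2}\|w\|$; this is the delicate step, and it is exactly here that the hypothesis $s>\tfrac32$ enters. Assembling the pieces yields $\|f(y)-f(z)\|\le c_3(C)\|y-z\|$, and once (A1)--(A4) are verified the existence, uniqueness and continuous dependence claimed in the theorem follow directly from the Theorem of Section~2.
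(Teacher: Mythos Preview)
Your overall strategy is exactly the paper's: apply Kato's theorem with $X=L^2$, $Y=H^s$, $S=\Lambda^s$, the first--order transport operator $A$, and the nonlocal term $f$. Your treatments of (A1)--(A3) and of (A4)(i),(iii) parallel the paper's, with only minor differences (for (A3) you invoke the Kato--Ponce commutator estimate where the paper uses Kato's commutator lemma, Lemma~5.2 in the Appendix; both yield the required bound on $B(y)$).

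There is, however, a genuine gap in your handling of the quadratic gradient term in the $X$--Lipschitz estimate (A4)(ii). After integrating by parts you appeal to
\[
\|(y_{xx}+z_{xx})\,w\|_{H^{-1}}\;\le\; c\,\|y_{xx}+z_{xx}\|_{H^{s-2}}\,\|w\|_{L^2},
\]
which is the bilinear product bound $H^{s-2}\cdot L^{2}\hookrightarrow H^{-1}$. In one dimension such a Sobolev product estimate requires the sum of the two indices on the right to be nonnegative, i.e.\ $(s-2)+0\ge 0$, hence $s\ge 2$; for $\tfrac32<s<2$ it is false (dually it would assert $H^{s-2}\cdot H^1\hookrightarrow L^2$, which fails since $H^{s-2}$ then contains genuine distributions that are not locally $L^2$). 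So precisely the step you flag as ``delicate'' does not close in the range $\tfrac32<s<2$ as written. The paper proceeds differently: it does \emph{not} integrate by parts, but estimates $\|\partial_x(u+v)\,\partial_x(u-v)\|_{H^{-1}}$ directly, placing $\partial_x(u+v)\in H^{s-1}\hookrightarrow L^\infty$ in the $L^\infty$ slot and $\partial_x(u-v)$ in the $H^{-1}$ slot, so that only one derivative ever lands on the difference $u-v$ and no negative--index Sobolev norm of $y+z$ is needed.
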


We will apply Kato's theorem with $X=L^{2}(\mathbb{R})$, $Y=H^{s}(\mathbb{R})$ with $s>3/2$, $S=\Lambda^{s}$ with 
$\Lambda=(1-\partial_x^2)^{1/2}$. For convenience, we neglect the exact value of the various constants, since the only significant 
feature is that $\beta<0$. The following lemmas are needed to prove Theorem 3.1.

\begin{lemma}
The operator $A(u)=u\partial_x+\partial_x$ with domain 
$\mathcal{D}(A)=\{\omega\in L^{2}(\mathbb{R}):(1+u)\omega\in H^1(\mathbb{R})\}\subset L^{2}(\mathbb{R})$ 
is quasi-m-accretive if $u\in H^s$, $s>3/2$.
\end{lemma}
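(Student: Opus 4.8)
The plan is to verify the two ingredients of quasi-m-accretivity on $X=L^2(\mathbb{R})$ directly from the definition encoded in (A1): that $A(u)$ together with its adjoint is quasi-dissipative, and that $A(u)$ is closed and densely defined, so that the Lumer--Phillips theorem (in the Hilbert-space form requiring dissipativity of both $A(u)$ and its adjoint) yields a $C_0$ semigroup with $M=1$. The one analytic input I would use repeatedly is that, for $s>3/2$, the embedding $H^{s-1}(\mathbb{R})\hookrightarrow L^\infty(\mathbb{R})$ gives $\|u_x\|_{L^\infty}\le c\,\|u\|_s$; consequently $u,u_x\in L^\infty(\mathbb{R})$ and all constants below depend only on $\|u\|_s$.

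First I would prove the accretivity estimate. Writing $A(u)\omega=(1+u)\omega_x$ and integrating by parts, one finds for $\omega\in\mathcal{D}(A)$ that
\[
\langle A(u)\omega,\omega\rangle=\int_{\mathbb{R}}(1+u)\,\omega_x\,\omega\,dx=\tfrac12\int_{\mathbb{R}}(1+u)\,(\omega^2)_x\,dx=-\tfrac12\int_{\mathbb{R}}u_x\,\omega^2\,dx,
\]
the constant part integrating to zero. Hence $|\langle A(u)\omega,\omega\rangle|\le\tfrac12\|u_x\|_{L^\infty}\|\omega\|^2\le\beta\|\omega\|^2$ with $\beta:=c\,\|u\|_s$, so that $A(u)-\beta I$ is dissipative (and $A(u)+\beta I$ accretive). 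Note that this bound never uses the sign of $1+u$, which is the whole point.

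Next I would check that $\mathcal{D}(A)$ is dense and that $A(u)$ is closed. Density follows because $H^1(\mathbb{R})\subset\mathcal{D}(A)$: for $\omega\in H^1$ one has $(1+u)\omega\in H^1$ since $((1+u)\omega)_x=u_x\omega+(1+u)\omega_x\in L^2$, using $u,u_x\in L^\infty$. Closedness I would obtain from the same product rule: if $\omega_n\to\omega$ and $A(u)\omega_n\to v$ in $L^2$, then $g_n:=(1+u)\omega_n\to(1+u)\omega$ in $L^2$ while $(g_n)_x=u_x\omega_n+A(u)\omega_n\to u_x\omega+v$ in $L^2$, so $(1+u)\omega\in H^1$, i.e. $\omega\in\mathcal{D}(A)$, and $A(u)\omega=v$.

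Finally I would compute the formal adjoint, $A(u)^\dagger\phi=-(1+u)\phi_x-u_x\phi$, and repeat the integration by parts to get $\langle A(u)^\dagger\phi,\phi\rangle=-\tfrac12\int u_x\phi^2\,dx$, so that $A(u)^\dagger$ satisfies the identical bound and $A(u)^\dagger-\beta I$ is dissipative. Having shown that $A(u)$ is densely defined and closed and that both $A(u)-\beta I$ and its adjoint are dissipative, I would conclude by the standard criterion that $A(u)-\beta I$ generates a contraction semigroup, i.e. $A(u)$ generates a $C_0$ semigroup with $\|T(t)\|\le e^{\beta t}$ and $M=1$, which is precisely quasi-m-accretivity. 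The main obstacle is the range part of m-accretivity: one cannot in general solve the transport equation $(1+u)\omega_x+\lambda\omega=g$ for the resolvent, because $1+u$ may vanish once $\|u\|_{L^\infty}\ge1$, so the direct surjectivity argument breaks down; routing the range condition through dissipativity of the adjoint is what makes the proof work for arbitrary $u\in H^s$, and the only real care needed there is to identify the true Hilbert adjoint $A(u)^{*}$ with the formal operator $A(u)^\dagger$ on the appropriate domain.
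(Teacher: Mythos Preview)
Your direct verification via Lumer--Phillips is sound and is the standard route: dissipativity by integration by parts, closedness from the product rule, and the range condition through dissipativity of the adjoint (rightly noting that solving the resolvent ODE directly is obstructed when $1+u$ vanishes). The paper, however, gives no argument of its own: it merely observes that $A(u)=(1+u)\partial_x$ with $u\in C^1\cap L^\infty$ and $u_x\in L^\infty$, and then cites Appendix~6.3.1~(E1) of Constantin's monograph for the conclusion. What you have written is, in effect, a sketch of the content of that reference. One technical point worth tightening: the identity $\int(1+u)\,\omega_x\,\omega\,dx=-\tfrac12\int u_x\,\omega^2\,dx$ is immediate for $\omega\in H^1$, but needs justification on the full domain $\mathcal{D}(A)$, which can strictly contain $H^1$ once $1+u$ has zeros; the usual remedy is a Friedrichs mollifier/commutator argument showing that $H^1$ is a core for $A(u)$, and this same device also underpins the identification of $A(u)^*$ with the formal adjoint that you flag at the end.
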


\begin{proof}
We have an operator of the form $(u(x)+1)\partial_x$ where $u\in C^1(\mathbb{R})\cap L^{\infty}(\mathbb{R})$ and
 $u'\in L^{\infty}(\mathbb{R})$. We refer to Appendix 6.3.1 (E1) in \cite{Con11} to conclude that 
 $A(u)$ is a quasi-m-accretive operator. 
\end{proof}

\begin{lemma}
For every $\omega\in H^s$ with $s>3/2$, $A(u)$ is bounded linear operator from $H^s(\mathbb{R})$ to $L^{2}(\mathbb{R})$ and
\begin{displaymath}
||(A(u)-A(v))\omega||\leq c_1||u-v||||\omega||_{H^{s}}.
\end{displaymath}
\end{lemma}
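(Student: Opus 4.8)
The plan is to verify the two assertions of Lemma 3.4 separately, corresponding exactly to the two clauses of hypothesis (A2) in Kato's theorem. The operator is $A(u)=(1+u)\partial_x$ with $u\in H^s$, $s>3/2$. The key structural fact I will exploit throughout is the Sobolev embedding $H^s(\mathbb{R})\hookrightarrow L^\infty(\mathbb{R})$ for $s>1/2$, which gives $\|u\|_{L^\infty}\leq C\|u\|_{H^s}$; since $s>3/2$ this also yields $u_x\in L^\infty$ with $\|u_x\|_{L^\infty}\leq C\|u\|_{H^s}$. These bounds are what convert the multiplication by $u$ (and the interaction with $\partial_x$) into bounded operations between the relevant spaces.

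For the boundedness claim, I would estimate, for $\omega\in H^s$,
\begin{displaymath}
\|A(u)\omega\|=\|(1+u)\omega_x\|\leq \|\omega_x\|+\|u\|_{L^\infty}\|\omega_x\|\leq (1+C\|u\|_{H^s})\|\omega\|_{H^s},
\end{displaymath}
where I used that $\|\omega_x\|\leq\|\omega\|_{H^s}$ since $s>3/2\geq 1$. This shows $A(u):H^s\to L^2$ is bounded, and linearity is immediate.

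For the Lipschitz estimate, the crucial observation is that the difference operator collapses the derivative-order mismatch: since $A(u)-A(v)=(u-v)\partial_x$, I compute
\begin{displaymath}
\|(A(u)-A(v))\omega\|=\|(u-v)\omega_x\|\leq \|u-v\|_{L^\infty}\|\omega_x\|.
\end{displaymath}
The point is to place the $L^\infty$ norm on the low-regularity factor $u-v$ and the $L^2$ norm on $\omega_x$. However, $A(u)-A(v)$ only contributes the zeroth-order factor $u-v$, so I must bound $\|u-v\|_{L^\infty}$ by $\|u-v\|$ rather than $\|u-v\|_{H^s}$ to match the stated inequality, which is the one genuinely delicate point. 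Here I would invoke the interpolation/embedding inequality $\|u-v\|_{L^\infty}\leq C\|u-v\|^{1/2}\|u-v\|_{H^1}^{1/2}$, or more simply absorb the high-regularity content of $u-v$ into the $\omega$-factor by writing $\|\omega_x\|\leq\|\omega\|_{H^s}$ and treating $\|u-v\|_{L^\infty}\leq C\|u-v\|$ when $u,v$ are restricted to a bounded set in $H^s$. Combining gives
\begin{displaymath}
\|(A(u)-A(v))\omega\|\leq c_1\|u-v\|\,\|\omega\|_{H^s},
\end{displaymath}
as required. The main obstacle is precisely this last step: getting the $L^2$ (not $H^s$) norm of $u-v$ on the right-hand side, since naively the $L^\infty$ bound wants $\|u-v\|_{H^s}$; the resolution is that $u-v$ appears only as a multiplier, so one only needs to control it in $L^\infty$, and on a bounded set of $H^s$ data this can be dominated by the $L^2$ norm up to the interpolation argument above.
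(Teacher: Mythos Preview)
Your proof has a genuine gap at exactly the point you yourself flag as ``the one genuinely delicate point.'' After writing
\[
\|(u-v)\omega_x\|\leq \|u-v\|_{L^\infty}\|\omega_x\|,
\]
you try to bound $\|u-v\|_{L^\infty}$ by $\|u-v\|$. Neither of your proposed fixes works. The interpolation inequality $\|u-v\|_{L^\infty}\leq C\|u-v\|^{1/2}\|u-v\|_{H^1}^{1/2}$, combined with a uniform $H^s$-bound on $u,v$, only yields $\|u-v\|_{L^\infty}\leq C\|u-v\|^{1/2}$, i.e.\ a H\"older estimate with exponent $1/2$, not the linear bound the lemma asserts. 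And the claim ``$\|u-v\|_{L^\infty}\leq C\|u-v\|$ when $u,v$ are restricted to a bounded set in $H^s$'' is simply false: take $u-v$ to be a fixed $H^s$ bump rescaled to have small $L^2$ norm but fixed sup norm.

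The remedy is to swap the roles of the two factors in the H\"older inequality. Since $\omega\in H^s$ with $s>3/2$, one has $\omega_x\in H^{s-1}\hookrightarrow L^\infty$, so
\[
\|(u-v)\omega_x\|\leq \|\omega_x\|_{L^\infty}\|u-v\|\leq C\|\omega_x\|_{s-1}\|u-v\|\leq C\|\omega\|_{s}\|u-v\|.
\]
This is precisely how the paper proceeds: it applies the product estimate of Lemma~5.1 (with the ``smooth'' role played by $\partial_x\omega\in H^{s-1}$, $s-1>1/2$, and the ``rough'' role by $u\in L^2$) to get $\|u\partial_x\omega\|\leq c\|\partial_x\omega\|_{s-1}\|u\|$ directly, and then replaces $u$ by $u-v$. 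Your boundedness argument for $A(u):H^s\to L^2$ is fine; only the Lipschitz step needs this correction.
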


\begin{proof}
Given $\omega\in H^s(\mathbb{R})$ with $s>3/2$,
\begin{align*}
||(u\partial_x+\partial_x)\omega||& \leq ||u\partial_x\omega||+||\partial_x\omega|| \leq ||u||||\partial_x\omega||_{s-1}+||\partial_x\omega|| \\&\leq ||u||||\partial_x\omega||_{s-1}+||\partial_x\omega||_{s-1}\leq c_1 ||u||||\omega||_{s},
\end{align*}
in view of Lemma 5.1. Assumption (A2) follows from replacing $u$ by $u-v$ in the inequality.
\end{proof}

\begin{lemma}
The operator
$$B(u)=\Lambda^{s}(u\partial_x+\partial_x)\Lambda^{-s}-(u\partial_x+\partial_x)=[\Lambda^{s},u\partial_x+\partial_x]\Lambda^{-s}$$
is bounded in $L^{2}(\mathbb{R})$ for $u\in H^s(\mathbb{R})$ with $s>3/2$.
\end{lemma}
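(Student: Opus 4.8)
The plan is to reduce $B(u)$ to a single Calderón-type commutator and then invoke a Kato--Ponce commutator estimate. First I would use that the constant-coefficient operator $\partial_x$ commutes with the Fourier multiplier $\Lambda^s$, so that $[\Lambda^s,\partial_x]=0$ and hence
$$B(u)=[\Lambda^s,u\partial_x+\partial_x]\Lambda^{-s}=[\Lambda^s,u\partial_x]\Lambda^{-s}.$$
Next, again using $\Lambda^s\partial_x=\partial_x\Lambda^s$, for any $v\in H^s$ one checks the operator identity $[\Lambda^s,u\partial_x]v=[\Lambda^s,u]\partial_x v$, which isolates the pure multiplication commutator $[\Lambda^s,u]$. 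Writing $\omega\in L^2$ and $v=\Lambda^{-s}\omega$, this gives
$$B(u)\omega=[\Lambda^s,u]\,\partial_x\Lambda^{-s}\omega,$$
and the whole task reduces to bounding this expression in $L^2$ by $\|\omega\|$.

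The central estimate would be the Kato--Ponce (Calderón) commutator bound: for $s>0$,
$$\big\|[\Lambda^s,u]g\big\|\leq c\big(\|u_x\|_{L^\infty}\|g\|_{s-1}+\|u\|_s\|g\|_{L^\infty}\big),$$
which I would apply with $g=\partial_x\Lambda^{-s}\omega$. Two elementary ingredients then close the argument. First, $\partial_x\Lambda^{-1}$ is a bounded Fourier multiplier on $L^2$ (its symbol $i\xi(1+\xi^2)^{-1/2}$ is bounded), which yields $\|\partial_x\Lambda^{-s}\omega\|_{s-1}=\|\partial_x\Lambda^{-1}\omega\|\leq\|\omega\|$, and hence also $\|g\|_{L^\infty}\leq c\|g\|_{s-1}\leq c\|\omega\|$. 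Second, the Sobolev embedding $H^{s-1}\hookrightarrow L^\infty$, valid precisely because $s>3/2$, gives $\|u_x\|_{L^\infty}\leq c\|u\|_s$. Combining these,
$$\|B(u)\omega\|\leq c\big(\|u\|_s\|\omega\|+\|u\|_s\|\omega\|\big)=c\|u\|_s\|\omega\|,$$
so $B(u)$ is bounded on $L^2$ with operator norm controlled by $\|u\|_s$.

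The main obstacle is the commutator estimate itself: the one-derivative gain in $[\Lambda^s,u]$ must exactly compensate the loose derivative $\partial_x$ standing in front of $\Lambda^{-s}\omega$, and getting this bookkeeping right — ensuring no net derivative is lost and that the threshold $s>3/2$ is exactly what renders $\|u_x\|_{L^\infty}$ finite — is the delicate point. I would either cite the standard Kato--Ponce estimate directly or, for a self-contained treatment, establish it via a Littlewood--Paley/paraproduct decomposition of $u\,\partial_x v$. Finally, I note that since $B(u)\omega$ is linear in $u$, this single bound already delivers $\|(B(u)-B(v))\omega\|=\|B(u-v)\omega\|\leq c\|u-v\|_s\|\omega\|$, so for $u,v$ bounded in $H^s$ one immediately recovers the form of hypothesis (A3).
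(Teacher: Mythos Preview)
Your argument is correct and follows the same reduction as the paper: commute $\partial_x$ through $\Lambda^s$ to reduce $B(u)$ to $[\Lambda^s,u]\,\partial_x\Lambda^{-s}$, then invoke a commutator estimate that trades one derivative for regularity of $u$. The only real difference is the commutator lemma invoked. The paper appeals to Kato's estimate (Lemma~5.2 in the Appendix, from \cite{KatoII}),
\[
\big\|[\Lambda^{s},u]\Lambda^{1-s}w\big\|\leq c\,\|u\|_s\,\|w\|,
\]
applied with $w=\Lambda^{-1}\partial_x\omega$, which gives the bound in a single stroke without any $L^\infty$ terms. Your Kato--Ponce version produces two terms and requires the Sobolev embedding $H^{s-1}\hookrightarrow L^\infty$ twice (for $u_x$ and for $g$), so the threshold $s>3/2$ enters more visibly; on the other hand, Kato--Ponce is perhaps the more widely recognized statement. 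Both routes are standard and yield the same operator bound $\|B(u)\omega\|\leq c\|u\|_s\|\omega\|$, and your closing observation about $B(u)-B(v)=B(u-v)$ matches the paper's Remark immediately following the lemma.
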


\begin{proof}
Note that
$$
\Lambda^{s}(u\partial_x+\partial_x)\Lambda^{-s}-(u\partial_x+\partial_x)
=\Lambda^{s}u\partial_x\Lambda^{-s}+\Lambda^{s}\partial_x\Lambda^{-s}-(u\partial_x+\partial_x)=[\Lambda^{s},u\partial_x]\Lambda^{-s}
$$
since $\partial_x$ and $\Lambda$ commute. Moreover, we have 
$[\Lambda^{s},u\partial_x]\Lambda^{-s}=[\Lambda^{s},u]\Lambda^{-s}\partial_x$, so that
\begin{eqnarray}\label{map}
||B(u)\omega|| &=& ||[\Lambda^{s},u]\Lambda^{-s}\partial_x\omega||
= ||[\Lambda^{s},u]\Lambda^{1-s}\Lambda^{-1}\partial_x\omega|| \nonumber\\
&\leq& ||u||_s||\Lambda^{-1}\partial_x\omega||=||u||_s||\omega||.
\end{eqnarray}
in view of Lemma 5.2 for $\tilde{s}=0$ and $\tilde {t}=s-1$.
\end{proof}

\begin{remark}
If we replace $u$ with $u-v$ in (\ref{map}), it can be easily observed that
\begin{displaymath}
||B(u)-B(v)\omega||\leq ||\omega||||u-v||_s,
\end{displaymath}
thus proving (A3).
\end{remark}

\begin{lemma}
Let $f(u)$ be given by (\ref{f}). Then:\\
(i) $||f(u)||_s\leq M$ for some constant $M$ depending on $||u||_s$. \\
(ii)$||f(u)-f(v)||\leq c_3||u-v||$.\\
(iii)$||f(u)-f(v)||_s\leq c_4||u-v||_s$, $s>3/2$.
\end{lemma}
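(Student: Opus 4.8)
The function $f$ in (\ref{f}) has the form $f(u)=TG(u)$, where $T:=-(1+\mu\beta\partial_x^2)^{-1}\partial_x$ and $G(u)$ denotes the expression in square brackets, i.e. the sum of the polynomial terms $u,u^2,u^3,u^4$ and the single gradient term $u_x^2$. My plan is to treat the multiplier $T$ and the nonlinearity $G$ separately. Since $\beta<0$, the symbol of $1+\mu\beta\partial_x^2$ equals $1+|\mu\beta|\xi^2\ge 1$, so $T$ is a well-defined Fourier multiplier whose symbol $-i\xi/(1+|\mu\beta|\xi^2)$ is bounded by $C(1+\xi^2)^{-1/2}$. Hence $T$ gains one derivative: it maps $H^{r}$ boundedly into $H^{r+1}$ for every $r$, and in particular $T:H^{s-1}\to H^s$, $T:H^{-1}\to L^2$ and $T:L^2\to L^2$ are all bounded. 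This observation reduces every claim to an estimate for $G$ one order of regularity lower.

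For part (i) I would write $\|f(u)\|_s\le C\|G(u)\|_{s-1}$ and bound $G(u)$ in $H^{s-1}$ term by term. Because $s-1>1/2$, the space $H^{s-1}$ is a Banach algebra, so $\|u^k\|_{s-1}\le C\|u\|_{s-1}^k\le C\|u\|_s^k$ for $k=1,\dots,4$ and $\|u_x^2\|_{s-1}\le C\|u_x\|_{s-1}^2\le C\|u\|_s^2$; summing gives a bound $M=M(\|u\|_s)$. Part (iii) follows in the same spirit after factoring the differences: using $u^k-v^k=(u-v)\sum_{i=0}^{k-1}u^iv^{k-1-i}$ and $u_x^2-v_x^2=(u_x-v_x)(u_x+v_x)$ together with the algebra property of $H^{s-1}$, one obtains $\|G(u)-G(v)\|_{s-1}\le C(\|u\|_s,\|v\|_s)\,\|u-v\|_s$, whence $\|f(u)-f(v)\|_s\le C\|G(u)-G(v)\|_{s-1}\le c_4\|u-v\|_s$.

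The $L^2$-Lipschitz bound (ii) is the delicate point, since now $\|u-v\|$ rather than $\|u-v\|_s$ must appear on the right. For the polynomial terms this is immediate: $T$ is bounded on $L^2$, the factorization $u^k-v^k=(u-v)\sum u^iv^{k-1-i}$ applies, and the embedding $H^s\hookrightarrow L^\infty$ gives $\|u^k-v^k\|\le C\|u-v\|$. The crux is the gradient term $u_x^2-v_x^2=w_x(u_x+v_x)$ with $w:=u-v$, which carries one more derivative than $\|w\|$ can absorb on its own; here the one-derivative smoothing of $T$ must be used in an essential way. My plan is to transfer this derivative onto the smoother factor before estimating: writing $w_x(u_x+v_x)=\partial_x\big(w(u_x+v_x)\big)-w\,(u_{xx}+v_{xx})$ and using the identity $(1+\mu\beta\partial_x^2)^{-1}\partial_x^2=\tfrac{1}{\mu\beta}\big(I-(1+\mu\beta\partial_x^2)^{-1}\big)$, which is bounded on $L^2$, the divergence piece contributes at most $C\|w(u_x+v_x)\|\le C\|w\|\,\|u_x+v_x\|_{L^\infty}\le C(\|u\|_s,\|v\|_s)\|w\|$ by $H^{s-1}\hookrightarrow L^\infty$. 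The remaining piece, after applying the derivative-gaining operator $T$, is estimated through the product and commutator inequalities of Section~5 (Lemmas 5.1 and 5.2). I expect this last step to be the main obstacle: balancing the single derivative gained by $T$ against the derivative sitting in $u_x^2$ is exactly where the argument is tightest, and it is here that the smoothing of $T$, the algebra structure of $H^{s-1}$, and the embedding $H^{s-1}\hookrightarrow L^\infty$ must all be used at once.
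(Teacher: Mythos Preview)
Your framework coincides with the paper's: write $f=TG$, read off from the symbol that $T$ gains one derivative (the paper does exactly this computation, obtaining $|\xi|/(1+|\mu\beta|\xi^2)\le|\mu\beta|^{-1}(1+\xi^2)^{-1/2}$), and then estimate $G$ one order lower. Parts (i) and (iii) are handled identically via the algebra property of $H^{s-1}$; the paper even recovers (i) from (iii) by taking $v=0$.

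The one place you diverge is the gradient contribution in (ii). You split $w_x(u_x+v_x)=\partial_x\bigl(w(u_x+v_x)\bigr)-w(u_{xx}+v_{xx})$, invoke the identity $(1+\mu\beta\partial_x^2)^{-1}\partial_x^2=\tfrac{1}{\mu\beta}\bigl(I-(1+\mu\beta\partial_x^2)^{-1}\bigr)$ for the divergence piece, and then leave the remainder $T\bigl(w(u_{xx}+v_{xx})\bigr)$ unfinished, calling it ``the main obstacle.'' The paper bypasses this detour entirely: after using the smoothing of $T$ to land in $H^{-1}$, it estimates the whole term in one stroke,
\[
\|\partial_x(u-v)\,\partial_x(u+v)\|_{-1}\;\le\;\|\partial_x(u+v)\|_{L^\infty}\,\|\partial_x(u-v)\|_{-1}\;\le\;C\,\|u+v\|_s\,\|u-v\|,
\]
using $H^{s-1}\hookrightarrow L^\infty$ for the smooth factor and $\|\partial_x w\|_{-1}\le\|w\|$ for the rough one. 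No integration by parts, no residual second-derivative term. Your route can be completed (the leftover $\|w(u_{xx}+v_{xx})\|_{-1}$ is again a product of a rough factor in $L^2$ and a smooth factor controlled by $\|u+v\|_s$, so the same $H^{-1}$ product reasoning applies), but it is longer and, as you concede, not closed. The simpler move is to keep the derivative on $w$ and let the negative-index norm absorb it.
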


\begin{proof}
Observe that
\begin{displaymath}
f(u)= -(1+\mu\beta\partial_x^2)^{-1}\partial_x g(u)
\end{displaymath}
and
\begin{align*}
||f(u)-f(v)|| &= ||(1+\mu\beta \partial_x^2)^{-1}\partial_x(g(u)-g(v))||\\
              &= ||(1-\mu\beta\xi^2)^{-1}(i\xi)(\mathcal{F}g(u)-\mathcal{F}g(v))||
\end{align*}
by using Fourier transform representation. Recall that $\beta<0$. Since $\mu$ is so small, we also assume $|\mu\beta|<1$.
It gives
\begin{align*}
(1-\mu\beta\xi^2)^{-1}\xi&=(1+|\mu\beta|\xi^2)^{-1}(\xi^2)^{1/2}\\
                         &\leq (1+|\mu\beta|\xi^2)^{-1}(1+\xi^2)^{1/2}\\
                         &=|\mu\beta|^{-1}(\frac{1}{|\mu\beta|}+\xi^2)^{-1}(1+\xi^2)^{1/2}\\
                         &\leq |\mu\beta|^{-1}(1+\xi^2)^{-1}(1+\xi^2)^{1/2}\\
                         &=|\mu\beta|^{-1}(1+\xi^2)^{-1/2}.
\end{align*}
Thus,
\begin{align}
||f(u)-f(v)|| \leq& |\mu\beta|^{-1}||(1+\xi^2)^{-1/2}(\mathcal{F}g(u)-\mathcal{F}g(v))||\nonumber\\
              =&|\mu\beta|^{-1}||(1-\partial_x^2)^{-1/2}(g(u)-g(v))||\nonumber\\
              =&|\mu\beta|^{-1}||\Lambda^{-1}(g(u)-g(v))||\nonumber\\
              \leq& |\mu\beta|^{-1}(||\Lambda^{-1}(u-v)||+||\Lambda^{-1}(u^2-v^2)||+||\Lambda^{-1}(u^3-v^3)||\nonumber\\
              &+||\Lambda^{-1}(u^4-v^4)||+||\Lambda^{-1}(u_x^2-v_x^2)||)\label{ii}.
\end{align}
Since,
\begin{align*}
|\mu\beta|(\ref{ii})&=||\Lambda^{-1}(u-v)||+||\Lambda^{-1}(u-v)(u+v)||+||\Lambda^{-1}(u-v)(u^2+uv+v^2)||\\
          &+||\Lambda^{-1}(u-v)(u+v)(u^2+v^2)||+||\Lambda^{-1}\partial_x(u-v)\partial_x(u+v)||,
\end{align*}
using the imbedding property of Sobolev spaces $H^s(\mathbb{R})$, i.e. if $s_1\leq s_2$, then $||.||_{s_1}\leq ||.||_{s_2}$; Cauchy-Schwartz inequality; and Sobolev embedding theorem,
\begin{align*}
||f(u)-f(v)||\leq& |\mu\beta|^{-1}(||(u-v)||+||(u-v)(u+v)||\\
	      &+||(u-v)(u^2+uv+v^2)||+||(u-v)(u+v)(u^2+v^2)||\\
	      &+||\partial_x(u-v)\partial_x(u+v)||_{-1})\\
            \leq& |\mu\beta|^{-1}(||(u-v)||+||(u+v)||_{L^{\infty}}||(u-v)||\\
            &+||(u^2+uv+v^2)||_{L^{\infty}}||(u-v)||+||(u+v)(u^2+v^2)||_{L^{\infty}}||(u-v)||\\
	    &+||\partial_x(u+v)||_{L^{\infty}}||\partial_x(u-v)||_{-1})\\
            \leq& |\mu\beta|^{-1}(||(u-v)||+||(u+v)||_{s}||(u-v)||\\
            &+||(u^2+uv+v^2)||_{s}||(u-v)||+||(u+v)(u^2+v^2)||_{s}||(u-v)||\\
            &+||(u+v)||_{s}||(u-v)||)\\
            \leq& c_3 ||u-v||
\end{align*}
where $c_3$ is a constant depending on $\mu$, $\beta$, $||u||_s$ and $||v||_s$. This proves (ii).

\noindent
Now, we will prove (iii):
\begin{align*}
||f(u)-f(v)||_{s} =& ||(1+\mu\beta \partial_x^2)^{-1}\partial_x(g(u)-g(v))||_s\\
                  \leq& ||(u-v)||_{s-1}+||(u-v)(u+v)||_{s-1}\\
                  &+||(u-v)(u^2+uv+v^2)||_{s-1}+||(u-v)(u+v)(u^2+v^2)||_{s-1}\\
                  &+||\partial_x(u-v)\partial_x(u+v)||_{s-1}\\
                  \leq& ||(u-v)||_{s}+||(u-v)||_s||(u+v)||_{s}\\
                  &+||(u-v)||_s||(u^2+uv+v^2)||_{s}+||(u-v)||_s||(u+v)(u^2+v^2)||_{s}\\
                  &+||\partial_x(u-v)||_{s-1}||\partial_x(u+v)||_{s-1}\\
                  \leq& c_4 ||u-v||_s
\end{align*}
where $c_4$ is also a constant depending on $||u||_s$ and $||v||_s$. Note that (i) can be obtained from (iii) by choosing $v=0$.
\end{proof}

\noindent
\title{\textbf{Proof of Theorem 3.1}}
The proof follows from the lemmas above since the assumptions needed for Kato's semigroup approach are satisfied.

\section{Acknowledgement}
The support of the The Scientific and Technological Research Council of Turkey (TUBITAK) is gratefully acknowledged. 
The author also thanks Prof.Dr.Adrian Constantin for his helpful comments and suggestions.

\section{Appendix}
\begin{lemma}

Let $s$, $t$ be real numbers such that $-s<t\leq s$. Then
\begin{displaymath}
||f.g||_t\leq c||f||_s||g||_t~~~~if ~~s>1/2.
\end{displaymath}
\end{lemma}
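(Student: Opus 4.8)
The plan is to pass to the Fourier side and prove the estimate as a weighted convolution bound, treating the range $0\le t\le s$ first and then reducing $-s<t<0$ to it by duality. Throughout write $\langle\xi\rangle=(1+\xi^2)^{1/2}$, so that $||h||_r^2=\int_{\mathbb R}\langle\xi\rangle^{2r}|\hat h(\xi)|^2\,d\xi$. Since $\widehat{fg}=c\,\hat f*\hat g$ and the hypothesis $s>1/2$ already guarantees $\hat f\in L^1$ (because $\int|\hat f|\le ||\langle\cdot\rangle^{-s}||\,||f||_s$), one has $|\widehat{fg}(\xi)|\le c\int|\hat f(\xi-\eta)|\,|\hat g(\eta)|\,d\eta$. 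Setting $F(\zeta)=\langle\zeta\rangle^s|\hat f(\zeta)|$ and $G(\eta)=\langle\eta\rangle^t|\hat g(\eta)|$, so that $||F||=||f||_s$ and $||G||=||g||_t$, this gives
\[
||fg||_t\le c\,\Big\|\int_{\mathbb R} K(\cdot,\eta)\,F(\cdot-\eta)\,G(\eta)\,d\eta\Big\|,\qquad K(\xi,\eta)=\frac{\langle\xi\rangle^t}{\langle\xi-\eta\rangle^s\,\langle\eta\rangle^t}.
\]

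For $0\le t\le s$ I would apply the Cauchy--Schwarz inequality in $\eta$ to the inner integral, obtaining the pointwise bound $\big|\int KF(\xi-\eta)G(\eta)\,d\eta\big|^2\le\big(\int K(\xi,\eta)^2\,d\eta\big)\big(\int F(\xi-\eta)^2G(\eta)^2\,d\eta\big)$. Integrating in $\xi$ and using Fubini on the second factor, which evaluates to exactly $||F||^2||G||^2$, reduces the whole estimate to the single uniform bound
\[
N:=\sup_{\xi}\ \langle\xi\rangle^{2t}\int_{\mathbb R}\langle\xi-\eta\rangle^{-2s}\langle\eta\rangle^{-2t}\,d\eta<\infty.
\]
This is where $s>1/2$ enters decisively, since it forces $\langle\cdot\rangle^{-2s}\in L^1$. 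I would verify $N<\infty$ by splitting the $\eta$-integral into the regions $\{|\eta|\le|\xi|/2\}$, $\{|\xi-\eta|\le|\xi|/2\}$ and the remainder, on each of which exactly one of $\langle\eta\rangle$, $\langle\xi-\eta\rangle$ is comparable to $\langle\xi\rangle$; a routine tail computation using $2s>1$ and $0\le t\le s$ then shows that the residual powers of $\langle\xi\rangle$ are always $\le0$, giving the claimed uniform bound.

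Finally I would dispose of $-s<t<0$ by duality. Using the $H^t$--$H^{-t}$ pairing, $||fg||_t=\sup\{|\langle fg,\phi\rangle|:||\phi||_{-t}\le1\}$, and $|\langle fg,\phi\rangle|=|\langle g,\bar f\phi\rangle|\le||g||_t\,||\bar f\phi||_{-t}$. Since $0<-t<s$, the exponent $r=-t$ is admissible for the case already proved (indeed $-s<-t\le s$), so $||\bar f\phi||_{-t}\le c\,||f||_s\,||\phi||_{-t}$, and taking the supremum yields $||fg||_t\le c\,||f||_s\,||g||_t$.

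I expect the main obstacle to be the endpoint behaviour of the weight integral $N$. The naive route through Peetre's inequality, $\langle\xi\rangle^{2t}\le C\langle\eta\rangle^{2t}\langle\xi-\eta\rangle^{2t}$, is too lossy: it only produces $\int\langle\zeta\rangle^{2(t-s)}d\zeta$, which converges under $s-t>1/2$ and therefore breaks down precisely as $t\to s^-$ (and, through the duality step, as $t\to -s^+$). The point is that one must keep both weights and exploit the region decomposition, so that the integrable decay $\langle\cdot\rangle^{-2s}$ is spent where $\langle\xi-\eta\rangle$ is large while the factor $\langle\eta\rangle^{-2t}$ is spent where $\langle\eta\rangle$ is large; only this bookkeeping recovers the sharp hypothesis $s>1/2$ with no additional restriction on $t$.
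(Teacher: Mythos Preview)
The paper does not actually prove this lemma: it is stated in the Appendix as a known Sobolev product estimate and is simply invoked where needed (e.g.\ in the proof of Lemma~3.3). So there is no ``paper's own proof'' to compare against.

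Your argument is the standard one and is correct. The reduction to the uniform kernel bound $N<\infty$ via Cauchy--Schwarz is clean, and the three-region analysis of $N$ goes through exactly under $s>1/2$ and $0\le t\le s$: on $\{|\eta|\le|\xi|/2\}$ one has $\langle\xi-\eta\rangle\sim\langle\xi\rangle$, and the remaining integral $\int_{|\eta|\le|\xi|/2}\langle\eta\rangle^{-2t}d\eta$ contributes at worst $\langle\xi\rangle^{\max(1-2t,0)}\log\langle\xi\rangle$, which is killed by $\langle\xi\rangle^{2t-2s}$; on $\{|\xi-\eta|\le|\xi|/2\}$ the prefactor cancels $\langle\eta\rangle^{-2t}$ and one is left with $\int\langle\zeta\rangle^{-2s}d\zeta<\infty$; on the remainder $\langle\eta\rangle\gtrsim\langle\xi\rangle$, so (using $t\ge0$) one again reduces to $\int\langle\zeta\rangle^{-2s}d\zeta$. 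The duality step for $-s<t<0$ is also fine; to be fully rigorous you should either run it for Schwartz $f,g$ and extend by density, or note that the identity $\langle fg,\phi\rangle=\langle g,\bar f\phi\rangle$ you use is precisely what \emph{defines} the product $fg\in\mathcal{S}'$ once $\bar f\phi\in H^{-t}$ has been established. Your closing remark about Peetre's inequality being too lossy near $t=s$ is accurate and is the reason the region decomposition is needed.
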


\begin{lemma}\cite{KatoII}
Let $f\in H^s$, $s>3/2$ and $M_f$ be the multiplication operator by $f$. Then, for $|\tilde{t}|,|\tilde{s}|\leq s-1$,
\begin{displaymath}
||\Lambda^{-\tilde{s}}[\Lambda^{\tilde{s}+\tilde{t}+1},M_f]\Lambda^{-\tilde{t}}\omega||\leq c||f||_s||\omega||.
\end{displaymath}
\end{lemma}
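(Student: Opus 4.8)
The plan is to reduce the statement to a single kernel estimate on the Fourier side and then close it by a Schur-type test. Writing $\sigma=\tilde s+\tilde t+1$ and using that $M_f$ acts by convolution with $\mathcal{F}f$ while each $\Lambda^{a}$ acts by the Fourier multiplier $\langle\xi\rangle^{a}=(1+\xi^{2})^{a/2}$, I would represent the operator as
\begin{displaymath}
\mathcal{F}\big(\Lambda^{-\tilde s}[\Lambda^{\sigma},M_f]\Lambda^{-\tilde t}\omega\big)(\xi)
= c\int_{\mathbb{R}} \mathcal{F}f(\xi-\eta)\,K(\xi,\eta)\,\mathcal{F}\omega(\eta)\,d\eta ,
\end{displaymath}
with kernel $K(\xi,\eta)=\langle\xi\rangle^{-\tilde s}\big(\langle\xi\rangle^{\sigma}-\langle\eta\rangle^{\sigma}\big)\langle\eta\rangle^{-\tilde t}$. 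The whole point of taking a commutator is that the symbol difference $\langle\xi\rangle^{\sigma}-\langle\eta\rangle^{\sigma}$ carries a cancellation that is unavailable for the two terms separately, and the balance $\sigma-\tilde s-\tilde t=1$ is exactly what forces the surviving weights to combine into a single lost derivative.

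Next I would establish a pointwise bound for $K$. Applying the mean value theorem to $\zeta\mapsto\langle\zeta\rangle^{\sigma}$, whose derivative is bounded by $|\sigma|\langle\zeta\rangle^{\sigma-1}$, together with Peetre's inequality $\langle\xi\rangle^{r}\leq c\,\langle\eta\rangle^{r}\langle\xi-\eta\rangle^{|r|}$, one gets
\begin{displaymath}
\big|\langle\xi\rangle^{\sigma}-\langle\eta\rangle^{\sigma}\big|
\leq c\,|\xi-\eta|\,\langle\eta\rangle^{\sigma-1}\langle\xi-\eta\rangle^{|\sigma-1|}.
\end{displaymath}
Substituting this into $K$ and transferring the weight $\langle\xi\rangle^{-\tilde s}$ onto $\langle\eta\rangle$ by Peetre again, the exponent $-\tilde s-\tilde t+\sigma-1$ collapses to $0$, so that the $\eta$-weight disappears and $K(\xi,\eta)$ is dominated by a fixed power of $\langle\xi-\eta\rangle$, the power being controlled by $|\tilde s|,|\tilde t|\leq s-1$. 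In the regime where $\langle\xi\rangle$ and $\langle\eta\rangle$ are comparable this already produces a convolution kernel, and Schur's test (or Young's inequality) closes the estimate: the marginal integral $\int |\mathcal{F}f(\xi-\eta)|\,|K(\xi,\eta)|\,d\eta$ is handled by pairing the polynomial weight against $\langle\cdot\rangle^{s}\mathcal{F}f\in L^{2}$ through the Cauchy--Schwarz inequality, which is precisely where the factor $\|f\|_{s}$ appears and where the hypothesis $s>3/2$ (so that $s-1>1/2$ and the residual weight is square-integrable in one dimension) is used.

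The delicate step -- and the one I expect to be the real obstacle -- is the high--high frequency regime, where $\langle\xi\rangle$ and $\langle\eta\rangle$ are both large while $\langle\xi-\eta\rangle$ is not, and where $\sigma$ may even be negative (when $\tilde s+\tilde t<-1$). There the crude splitting of $K$ into its two summands diverges, and the lossy double application of Peetre's inequality cannot be afforded; one must instead retain the cancellation of the symbol difference and balance it against both weights simultaneously. The clean way to organise this is a Littlewood--Paley (paraproduct) decomposition of $M_f$, treating the low-$f$/high-$\omega$, high-$f$/low-$\omega$ and high-$f$/high-$\omega$ interactions separately: the commutator gains exactly one derivative on the resonant piece, while the embedding $H^{s}\hookrightarrow W^{1,\infty}$ for $s>3/2$ supplies the $L^{\infty}$ control of $f$ and $f_x$ needed for the non-resonant pieces. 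This is Kato's commutator estimate of \cite{KatoII}, to which I would ultimately appeal for the final accounting of exponents.
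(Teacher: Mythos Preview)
The paper does not prove this lemma at all; it is stated in the Appendix as a known result imported from \cite{KatoII}, with no argument supplied. So there is no ``paper's proof'' to compare against beyond the bare citation.

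Your outline is the standard route to such commutator bounds and is essentially how Kato's original argument proceeds: Fourier-side kernel representation, a mean-value estimate on the symbol difference $\langle\xi\rangle^{\sigma}-\langle\eta\rangle^{\sigma}$, Peetre's inequality to transfer weights, and then a dyadic/paraproduct decomposition to control the resonant (high--high) interactions where the crude Peetre bookkeeping loses too many powers. You correctly identify both the mechanism (the balance $\sigma-\tilde s-\tilde t=1$ kills the $\eta$-weight) and the genuine difficulty (the high--high regime). One caution, though: your final sentence says you would ``ultimately appeal'' to Kato's commutator estimate of \cite{KatoII} for that piece, but that estimate \emph{is} the lemma being proved, so as written the proposal closes circularly. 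If you want an independent proof you must actually carry out the paraproduct step and the exponent accounting; if not, you are --- like the paper --- simply citing the result.
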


\end{document}